\theoremstyle{plain}
\newtheorem{theorem}{Theorem}[section]
\newtheorem{lemma}[theorem]{Lemma}
\newtheorem{corollary}[theorem]{Corollary}
\theoremstyle{definition}
\newtheorem{definition}[theorem]{Definition}
\theoremstyle{remark}
\newtheorem{remark}[theorem]{Remark}
\newcommand{\incl}[1][r]{\ar@<-0.2pc>@{^(-}[#1] \ar@<+0.2pc>@{-}[#1]}
\newcommand{\p}{\mathbb{P}}
\newcommand{\Q}{\mathbb{Q}}
\newcommand{\R}{\mathbb{R}}
\title{A note on the $G$-Sarkisov program}
\date{\today}
\author{Enrica Floris}
\begin{document}

\thanks{The author would like to thank  Anne-Sophie Kaloghiros and Boris Pasquier for discussions, Andrea Fanelli for his comments on an earlier draft and 
J\'er\'emy Blanc for his comments and for suggesting the problem and the application to subgroups of the Cremona group.
The author is supported by the ANR Project FIBALGA ANR-18-CE40-0003-01.
}

\begin{abstract}
The purpose of this note is to prove the $G$-equivariant Sarkisov program for a connected algebraic group $G$
following the proof of the Sarkisov program by Hacon and McKernan.
As a consequence, we obtain a characterisation of connected subgroups of $Bir(Z)$ acting rationally on $Z$.
\end{abstract}

\maketitle

\section{Introduction}

The result of the MMP on a pair with non pseudoeffective log-canonical divisor is a Mori fibre space by \cite{BCHM}.
The outcome of the MMP is nevertheless not unique and the Sarkisov program describes the relation between two different Mori fibre spaces that are outcomes of two MMP on the same variety:
\begin{theorem}[\cite{HMcK}, \cite{Cor95}]
Suppose that $\phi\colon X\to S$ and $\psi\colon Y\to T$ are two Mori fibre spaces.
Then $X$ and $Y$ are birational if and only if they are related by a sequence of Sarkisov links.
\end{theorem}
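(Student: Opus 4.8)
The forward implication is immediate: a Sarkisov link is by definition a birational map between Mori fibre spaces, so any finite composite of links is again a birational map $X \dashrightarrow Y$. The content lies entirely in the converse, and the plan is to follow Hacon and McKernan and reduce the construction of the factorisation to the minimal model program with scaling, whose existence and termination are guaranteed by \cite{BCHM}.

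Fix a birational map $f\colon X\dashrightarrow Y$ and a common resolution $p\colon W\to X$, $q\colon W\to Y$. The first step is to encode each Mori fibre space as a relative canonical model of a klt pair. Choosing a sufficiently general ample $\mathbb{Q}$-divisor $A$ on $Y$ with $(Y,A)$ terminal and $K_Y+A$ ample over $T$, the structure morphism $Y\to T$ is recovered as the canonical model over $T$ of $(Y,A)$; transporting $A$ to $X$ by $f^{-1}_*$ yields a boundary $\Delta$, and the whole problem becomes a comparison of the pair $(X,\Delta)$ with the pulled-back data from $Y$. The governing dichotomy is the Noether--Fano inequality: either $(X,\Delta)$ is canonical with $K_X+\Delta$ nef over $S$, in which case the two structures coincide up to a square (isomorphism of Mori fibre spaces), or one of these conditions is violated at a maximal centre, which is the signal that a nontrivial link must be performed.

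Rather than choosing the centre of the link by hand as in \cite{Cor95}, the plan is to let the program produce the links. Concretely, I would form on a suitable model a one-parameter family of ample divisors interpolating between the two fibre space structures and run the associated $(K_X+\Delta)$-MMP with scaling. As the scaling parameter decreases through a finite set of rational critical values, the program performs divisorial contractions, flips, flops, and, at a fibre-type contraction, a change of base $S\rightsquigarrow T$; the decisive verification is that the block of steps occurring between two consecutive critical values assembles into exactly one of the four types of Sarkisov link, governed at each intermediate $\mathbb{Q}$-factorial Fano-type model by a $2$-ray game. Carrying out this identification is the technical heart of the argument.

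The main obstacle is termination: showing that only finitely many links occur and that the process does not cycle. Here I would combine the finiteness of models and termination of the MMP with scaling from \cite{BCHM} with the classical monotonicity of the Sarkisov degree --- the invariant assembled from the threshold $\mu$ measuring the failure of nefness, the canonical threshold, and the number of crepant divisors --- checking that this degree lives in a well-ordered set and strictly decreases across each link. Once termination is secured, the sequence of links necessarily ends with an isomorphism of Mori fibre spaces, which is precisely the required factorisation of $f$.
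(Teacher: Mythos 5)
There is a genuine gap, and it lies exactly where you locate the ``technical heart'': termination. Although you announce that you will ``follow Hacon and McKernan,'' what you actually describe is Corti's original argument from \cite{Cor95} --- the Noether--Fano inequality, an untwisting link chosen at a maximal centre, and termination via the monotone decrease of the Sarkisov degree $(\mu, c, e)$ in a well-ordered set. That argument is complete only in dimension $3$: in higher dimensions the well-orderedness and the strict decrease of the degree rest on ACC-type statements for canonical thresholds and on boundedness properties that are not available, and this is precisely the reason Hacon and McKernan had to give a different proof. So as written, your plan establishes the theorem for threefolds but does not prove the general statement, and the sentence ``checking that this degree lives in a well-ordered set'' conceals an open problem rather than a verification.

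The route the paper actually takes (following \cite{HMcK}, see Theorem \ref{thm3.3} and Lemma \ref{lem41}) avoids the Sarkisov degree entirely. One realises both fibre space structures as ample models of divisors in a carefully chosen \emph{two-dimensional} rational affine subspace $V$ of $WDiv_{\R}(Z)$ on a common resolution $Z$, and invokes the finiteness of ample models from \cite{BCHM}: the polytope $\mathcal E_A(V)$ decomposes into finitely many chambers $\mathcal A_{A,f_i}$, the chambers of $\phi\circ f$ and $\psi\circ g$ are one-dimensional faces on the boundary, and walking along the boundary of $\mathcal E_A(V)$ from one to the other crosses finitely many walls, each of which is shown (\cite[Theorem 3.7]{HMcK}) to produce exactly one Sarkisov link via the contraction morphisms $f_{ij}$ between adjacent ample models. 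Termination is then automatic from the finiteness of the chamber decomposition --- no invariant needs to decrease. Your proposal would be repaired by replacing the Noether--Fano/degree machinery with this finiteness-of-models argument; the two-ray game intuition you mention survives, but as a description of what happens at a wall of the polytope rather than as the engine of the induction.
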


A Sarkisov link between $\phi\colon X\to S$ and $\psi\colon Y\to T$ is a diagram of one of the four following types:

\[\begin{array}{cccc} 
I &  II & III & IV\\
\xymatrix{
X'\ar[d]\ar@{-->}[r]& Y\ar^{\psi}[d]\\
X\ar_{\phi}[d]& T\ar[ld]\\
S&
} & 
\xymatrix{
X'\ar[d]\ar@{-->}[r]& Y'\ar[d]\\
X\ar_{\phi}[d]& Y\ar^{\psi}[d]\\
S\ar@{=}[r]&T
} & 
\xymatrix{
X\ar_{\phi}[d]\ar@{-->}[r]& Y'\ar[d]\\
S\ar[rd]& Y\ar^{\psi}[d]\\
&T
} & 
\xymatrix{
X\ar_{\phi}[d]\ar@{-->}[rr]&& Y\ar^{\psi}[d]\\
S\ar[rd]&& T\ar[ld]\\
&R&
}
\end{array}\]

The vertical arrows in the diagrams are extremal contractions;
the arrows to $X$ or $Y$ are
divisorial contractions; the horizontal dotted arrows are composition of $K +\Xi$-flops
where $\Xi$ is a divisor  on the space on the top left
such that $K +\Xi$ is klt and numerically
trivial over the base. 

\medskip

Let $G$ be an connected algebraic group.
If we have a klt $G$-pair $(W,\Lambda)$, that is, a klt pair $(W,\Lambda)$ together with a regular action of $G$ on $W$ preserving $\Lambda$,
we can run a $G$-equivariant MMP on $(W,\Lambda)$, that is, an MMP where all the birational maps, divisorial contractions and flips, are compatible with the action of the group.

If $K_W+\Lambda$ is not pseudoeffective, the outcome of such an MMP is a Mori fibre space $\phi\colon X\to S$ with a regular action of $G$ on $X$.

\begin{definition}
A $G$-{\it Mori fibre space} is a Mori fibre space with a regular action of a group $G$.

 We say that two $G$-Mori fibre spaces $\phi\colon X\to S$
and $\psi\colon Y\to T$ are $G$-{\it Sarkisov related} if $X$ and $Y$ are results
the $G$-equivariant MMP on $(Z,\Phi)$, for the same $\mathbb{Q}$-factorial klt
$G$-pair $(Z,\Phi)$.

\end{definition}

The purpose of this note is to prove that two Mori fibre spaces that are outcomes of two $G$-equivariant MMP on the same $G$-pair are related by a sequence of
$G$-equivariant Sarkisov links.

\begin{theorem}\label{thmA}
Let $G$ be a connected algebraic group.
Let $(W,\Lambda)$ be a klt $G$-pair.
Let $\phi\colon X\to S$ and $\psi\colon Y\to T$ be two $G$-Sarkisov related Mori fibre spaces.
Then $X$ and $Y$ are  related by a sequence of $G$-equivariant Sarkisov links and in every such link the horizontal dotted arrows are compositions of $G$-equivariant flops with respect to a suitable boundary.
\end{theorem}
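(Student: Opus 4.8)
The plan is to run the Hacon--McKernan proof of the Sarkisov program on the common pair $(Z,\Phi)$ and to observe that, because $G$ is connected, the whole construction is automatically $G$-equivariant. The organising remark is that a connected algebraic group acting on a normal projective variety $V$ acts trivially on the N\'eron--Severi space $N^1(V)$: the image of $G$ in the automorphisms of the lattice $\NS(V)/\mathrm{tors}$ is a connected subgroup of a discrete group, hence trivial. I would exploit this in two ways. On the one hand, since $\Phi$ is $G$-invariant, every $(K_Z+\Phi)$-extremal ray is $G$-invariant (as $G$ fixes every class in the dual space $N_1(Z)$), so every divisorial contraction, flip and Mori fibration produced by any $(K_Z+\Phi)$-MMP is $G$-equivariant, the action on each output being inherited by the universal property of the contraction. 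On the other hand, the ample model of a divisor $D$ with $K_Z+\Phi+D$ klt depends only on the numerical class of $D$; since $g^{*}D\equiv D$ for all $g\in G$, the group permutes---hence fixes---each ample model and acts on it compatibly with the structure maps.

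With this in hand I would set up the Hacon--McKernan interpolation. Choose ample $\Q$-divisors $A_X$ and $A_Y$ on $X$ and $Y$ and transport their classes to $Z$ through the birational contractions $Z\dashrightarrow X$, $Z\dashrightarrow Y$, obtaining $G$-invariant classes $\alpha_X,\alpha_Y\in N^1(Z)$. I would then feed these two polarisations into the Hacon--McKernan variation argument: by finiteness of models \cite{BCHM} the ample models interpolating between the two polarised pairs vary in finitely many steps, the two Mori fibre spaces $X\to S$ and $Y\to T$ being recovered at the two ends, and each elementary step is a Sarkisov link of one of the four types produced by the relative two-ray game \cite{HMcK}. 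The horizontal maps inside such a link are the $(K+\Xi)$-flops occurring where the relevant divisor is numerically trivial over the base.

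Equivariance of the resulting chain is now essentially free. Every model occurring along the subdivision is an ample model of a $G$-invariant class, so carries a $G$-action; every wall-crossing link is assembled from extremal contractions and flips of $G$-invariant rays over $G$-invariant bases, hence is $G$-equivariant step by step. In particular each flop $X'\dashrightarrow Y'$ in the top row of a link is $G$-equivariant because $Y'$ is the canonical (hence $G$-equivariant) small modification determined by $X'$ and the $G$-invariant flopping class; a certifying boundary $\Xi$ with $K+\Xi$ klt and numerically trivial over the base exists by \cite{HMcK}, and its numerical class may be taken $G$-invariant. Concatenating the links across the interior walls yields the required $G$-equivariant sequence relating $X$ and $Y$.

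The main obstacle is not the combinatorics of the links, which is inherited verbatim from \cite{HMcK}, but the rigorous passage from ``$G$-invariant numerical class'' to ``$G$-action on the new variety'': one must check that each ample model and each flopped variety genuinely linearises the $G$-action---compatibly along an entire two-ray game and across successive links---rather than only after passing to a cover, a cocycle twist, or a subgroup. I expect this to follow uniformly from the uniqueness of ample models together with triviality of $G$ on $N^1$, which forces $G$ to act on each relevant $\mathrm{Proj}$ by functoriality. A secondary delicate point, worth isolating, is that genuinely $G$-invariant ample boundary \emph{divisors} need not exist (for instance under $\PGL_{n+1}$ acting on $\p^n$), so the argument must be phrased throughout in terms of invariant classes and canonical models rather than invariant representatives; it is precisely the connectedness of $G$, via the $N^1$-triviality, that makes this possible.
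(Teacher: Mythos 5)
Your proposal follows essentially the same route as the paper: run the Hacon--McKernan finiteness-of-models and wall-crossing argument on the common pair $(Z,\Phi)$ and observe that connectedness of $G$ makes every step equivariant, the key point in both cases being that $G$ acts trivially on the relevant discrete numerical data (extremal rays, chambers of $\mathcal{E}_A(V)$). The only difference of emphasis is that the paper descends the action to each model along the contraction morphisms $f_{ij}$ via Blanchard's lemma, which also guarantees that the induced actions are regular algebraic actions and independent of the choices made, whereas you invoke $G$-invariance of numerical classes and functoriality of ample models; these amount to the same mechanism.
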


As a direct consequence we obtain the following characterisation of subgroups of $Bir(W)$ that are maximal among the connected groups acting rationally on $W$.

\begin{corollary}\label{thmB}
Let $W$ be an uniruled variety
and let $G$ be a connected algebraic group acting rationally on $W$.
Then $G$ is maximal among the connected groups acting rationally on $W$
if and only if $G=Aut^0(X)$ where $\phi\colon X\to S$  is a Mori fibre space
and for every Mori fibre space $\psi\colon Y\to T$ which is related to $\phi$ by a finite sequence of $G$-Sarkisov links we have $G=Aut^0(Y)$.
\end{corollary}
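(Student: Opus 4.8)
The plan is to derive this from Theorem~\ref{thmA} together with two standard inputs (working over a field of characteristic zero): the regularisation of rational actions of connected algebraic groups, and the fact that a $G$-equivariant MMP on a uniruled variety terminates with a Mori fibre space. First I would record the basic construction. If a connected algebraic group acts rationally on $W$, regularisation produces a projective variety on which it acts by regular automorphisms and which is equivariantly birational to $W$; since $W$ is uniruled this variety has non-pseudoeffective canonical class, so after choosing an invariant boundary a $G$-equivariant MMP (available by \cite{BCHM}) yields a $G$-Mori fibre space $\phi\colon X\to S$. Because the group is connected and acts regularly, it maps into the identity component of the automorphism group scheme, so it is contained in $\Aut^0(X)$; moreover, $X$ being projective, $\Aut^0(X)$ is itself a connected algebraic group which, through the birational identification $\Bir(X)=\Bir(W)$, acts rationally on $W$.

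For the implication $\Rightarrow$, I would assume $G$ maximal, apply the construction above to obtain $\phi\colon X\to S$ with $G\subseteq\Aut^0(X)$, and invoke maximality to conclude $G=\Aut^0(X)$. If $\psi\colon Y\to T$ arises from $\phi$ by finitely many $G$-Sarkisov links, then by Theorem~\ref{thmA} each link is $G$-equivariant, so $G$ acts regularly on $Y$ and hence $G\subseteq\Aut^0(Y)$; as $\Aut^0(Y)$ is again a connected algebraic group acting rationally on $W$, maximality gives $G=\Aut^0(Y)$.

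For the converse, I would take a connected algebraic group $H\supseteq G$ acting rationally on $W$ and show $H=G$. Regularising the $H$-action gives a projective variety $\tilde X$ with a regular $H$-action, hence a regular $G$-action since $G\subseteq H$, equivariantly birational to $X$; running an $H$-equivariant MMP on $\tilde X$ produces an $H$-Mori fibre space $\psi\colon Y\to T$, so $H\subseteq\Aut^0(Y)$. The crux is then to check that the two $G$-Mori fibre spaces $\phi\colon X\to S$ and $\psi\colon Y\to T$, which are $G$-equivariantly birational, are in fact $G$-Sarkisov related; granting this, Theorem~\ref{thmA} joins them by a finite sequence of $G$-Sarkisov links, the hypothesis yields $G=\Aut^0(Y)$, and then $G\subseteq H\subseteq\Aut^0(Y)=G$ forces $H=G$.

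The hard part is exactly this last reduction: producing a single $\Q$-factorial klt $G$-pair $(Z,\Phi)$ from which a $G$-MMP recovers both $X\to S$ and $Y\to T$. I would take a $G$-equivariant common resolution $Z$ of $X$ and $Y$ and let $\Phi$ be a $G$-invariant combination of small multiples of general ample boundaries pulled back from $X$ and from $Y$, together with the exceptional divisors, chosen so that $(Z,\Phi)$ is klt with $K_Z+\Phi$ not pseudoeffective. Running $G$-MMP with scaling in the two resulting directions should contract $Z$ back to the two given fibre space structures; verifying that any $G$-Mori fibre space is realised in this way as an MMP outcome of such a pair is where the equivariant analogue of the standard Sarkisov realisation argument is needed, and is the main obstacle. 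The remaining input, Weil's regularisation of rational group actions followed by equivariant compactification and resolution, I would use as a black box.
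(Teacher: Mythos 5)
Your proposal follows essentially the same route as the paper: Weil regularisation plus a $G$-equivariant MMP to produce $\phi\colon X\to S$ with $G\subseteq\Aut^0(X)$, maximality for the forward implication, and regularisation of a strictly larger connected group $H$ followed by an $H$-equivariant MMP (which is automatically $G$-equivariant) for the converse. The step you single out as the main obstacle --- exhibiting a single $\Q$-factorial klt $G$-pair realising both $X\to S$ and $Y\to T$ as MMP outcomes, so that they are $G$-Sarkisov related --- is exactly the point the paper dispatches in one line (``This MMP is also $G$-equivariant, therefore $X\to S$ and $Y\to T$ are $G$-Sarkisov related''), so your sketch via a common $G$-equivariant resolution with a suitable invariant boundary is just an honest spelling-out of what the paper leaves implicit, not a deviation from its argument.
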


We have the following application to subgroups of the Cremona group.

\begin{corollary}
Let $G$ be a connected algebraic group acting rationally on $\p^n$.
Then $G$ is maximal among the connected groups acting rationally on $\p^n$
if and only if $G=Aut^0(X)$ where $\phi\colon X\to S$  is a rational Mori fibre space
and for every Mori fibre space $\psi\colon Y\to T$ which is related to $\phi$ by a finite sequence of $G$-Sarkisov links we have $G=Aut^0(Y)$.
\end{corollary}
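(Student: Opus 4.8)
The plan is to deduce this corollary from Corollary~\ref{thmB} applied to the uniruled variety $W=\p^n$; the only thing to add is that, in this case, ``Mori fibre space'' and ``rational Mori fibre space'' describe the same objects. Since $\p^n$ is rational, hence uniruled, Corollary~\ref{thmB} applies directly and characterises the maximality of a connected algebraic group $G$ acting rationally on $\p^n$ by the existence of a Mori fibre space $\phi\colon X\to S$ with $G=Aut^0(X)$ and with the property that $G=Aut^0(Y)$ for every $Y$ linked to $X$ by a finite chain of $G$-Sarkisov links. It therefore suffices to check, in both directions, that the Mori fibre space appearing here may be taken to be rational.

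For the implication from maximality, I would note that the Mori fibre space produced by Corollary~\ref{thmB} is obtained by running a $G$-equivariant MMP starting from a regularisation of the rational $G$-action on $\p^n$; as each elementary step of the MMP is birational, the resulting $X$ is birational to $\p^n$ and hence rational, so $\phi\colon X\to S$ is a rational Mori fibre space. Conversely, if $G=Aut^0(X)$ for a rational Mori fibre space $X$ with the stated stability property, then $X$ is birational to $\p^n$, and any birational map $f\colon X\dashrightarrow\p^n$ induces by conjugation an isomorphism of abstract groups $\Bir(X)\iso\Bir(\p^n)$ carrying $G$ to a connected algebraic group acting rationally on $\p^n$. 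Applying Corollary~\ref{thmB} with $W=X$ and transporting the conclusion along $f$ then yields the maximality of $G$ among connected groups acting rationally on $\p^n$.

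The point requiring the most care is the transfer along the birational map $f$: one must verify that conjugation by $f$ sends connected algebraic subgroups acting rationally on $X$ to connected algebraic subgroups acting rationally on $\p^n$, that it preserves the inclusion order and hence maximality, and that the notion of $G$-Sarkisov relatedness is invariant under this identification. All of this is built into the birational nature of the $G$-Sarkisov program and of Corollary~\ref{thmB}, so beyond this bookkeeping no genuinely new difficulty arises: the statement is a specialisation of Corollary~\ref{thmB} to $W=\p^n$.
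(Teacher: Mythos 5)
Your proposal is correct and matches the paper's treatment: the paper offers no separate argument for this corollary, presenting it as a direct specialisation of Corollary~\ref{thmB} to $W=\p^n$, and your added observation that the Mori fibre spaces arising here are automatically birational to $\p^n$ (hence rational) is precisely the point that makes the specialisation legitimate. The bookkeeping you flag about transporting maximality along a birational map $X\dashrightarrow\p^n$ is routine and consistent with how the paper already identifies $G$ with its conjugates in the proof of Corollary~\ref{thmB}.
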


 \section{Proof of Theorem \ref{thmA}}
 In this section we present a proof of Theorem \ref{thmA}.
We refer to \cite{KM98} for the definitions of singularities of pairs of MMP and of Mori fibre space; to \cite{DuLi16} for the definition of rational and regular action and to \cite[Definition 3.1]{HMcK} for the definition of ample model. 
 


First, we recall some well-known facts on $G$-equivariant MMP and some preliminary definitions and results from \cite[Section 3]{HMcK}.

\begin{definition}
 We call a pair $(Z,\Phi)$ a $G$-{\it pair} if $G$ acts on $Z$ regularly and for all $g\in G$
we have $g\cdot\Phi=\Phi$.
\end{definition}
\begin{remark}
 The pair $(Z,0)$ is a $G$-pair for every subgroup $G$ of $Aut(Z)$.
\end{remark}

\begin{remark}\label{GMMP}
 Let $G$ be a connected group and $(Z,\Phi)$ a $G$-pair.
 Then any MMP on $(Z,\Phi)$ is $G$-equivariant.
 
 Indeed let $Z\dasharrow Z_1$ be the first step of the MMP.
 If it is an extremal contraction, then by the Blanchard's lemma \cite[Prop 4.2.1]{BSU13} there is an induced action on $Z_1$ making the map $G$-equivariant.
 Equivalently, a connected group acts trivially on the extremal rays contained in the $(K_Z+\Phi)$-negative part of the Mori cone, that are discrete.
 Then the extremal ray corresponding to the contraction $Z\to Z_1$ is $G$-invariant and so is the locus spanned by it.
 
 Assume that the first step is a flip given by the composition of two small contractions $\mu\colon Z\to Y$
 and $\mu^+\colon Z_1\to Y$.
 By the discussion above, there is an action on $Y$ making $\mu$ $G$-equivariant.
 Moreover, $Z_1\cong Proj_Y \bigoplus_m\mu_*\mathcal{O}_X(m(K_Z+\Phi))$.
 Since $K_Z+\Phi$ is $G$-invariant, the group $G$ acts on $\mathcal{O}_X(m(K_Z+\Phi))$ for $m$ sufficiently divisible, and subsequently on $Z_1$.
See \cite{KM98} and \cite{CCdVI} for a discussion of the case where $G$ is finite.
 \end{remark}

\begin{remark}\label{commute}
 If there is a commutative diagram 
 $$
 \xymatrix{
 &Z\ar@{-->}[ld]_{f_1}\ar@{-->}[rd]^{f_2}&\\
 X_1\ar[rd]_{g_1}&&X_2\ar[ld]^{g_2}\\
 &Y&
 }
 $$
 where $f_i$ is birational and $G$-equivariant for $i=1,2$
 and $g_i$ is a morphism for $i=1,2$, then, by the Blanchard's lemma  \cite[Prop 4.2.1]{BSU13} there are two actions of $G$ on $Y$.
 These actions coincide. 
 Indeed, they coincide on an open set contained in the image of the open set where $f_2\circ f_1^{-1}$ is an isomorphism.
\end{remark}

Let $Z$ be a smooth projective variety. From now on, we assume the setup of \cite[Section 3]{HMcK}.

%



\begin{definition}\cite{BCHM}
 Let $V$ be a finite dimensional affine subspace of the real vector space $WDiv_{\R} (Z)$ of Weil divisors
on $Z$, which is defined over the rationals, and $A \geq 0$  an ample $\Q$-divisor on $Z$.
 \begin{align*}
\mathcal L_A (V ) = &\{ \Theta = A + B |~B \in V,~ B \geq 0, ~K_Z + \Theta ~\text{is log canonical and}  \},\\
\mathcal E_A (V ) = &\{ \Theta \in \mathcal L_A (V ) ~|~ K_Z + \Theta ~\text{is pseudo-effective} \}.
 \end{align*}
 \end{definition}
As in \cite{HMcK}, we assume that there exists $B_0 \in V$ such that $K_Z + \Theta_0 = K_Z + A + B_0$ is
big and klt.
 Given a rational contraction $f \colon Z\dasharrow X$, define
$$\mathcal A_{A,f} (V ) = \{ \Theta \in\mathcal E_A (V ) |~ f ~\text{ is the ample model of }~(Z, \Theta) \}.$$
In addition, let $\mathcal C_{A,f} (V )$ denote the closure of $\mathcal A_{A,f} (V )$.
We recall the following result from \cite{HMcK} for the benefit of the reader.
\begin{theorem}\cite[Theorem 3.3]{HMcK}\label{thm3.3}
There is a natural number $m$ and there are $f_i \colon Z\dasharrow X_i$ rational contractions $1\leq i\leq m$
with the following properties:
\begin{enumerate}
 \item $\{ \mathcal A_i = \mathcal A_{A,f_i}  |1\leq i\leq m \}$ is a partition of $\mathcal E_A (V ) $. $\mathcal A_i$ is a finite union
of relative interiors of rational polytopes. If $f_i$ is birational, then
$\mathcal C_i = \mathcal C_{A,f_i}$ is a rational polytope.
\item If $1\leq i\leq m$ and $1\leq j\leq m$ are two indices such that $ \mathcal A_j \cap \mathcal C_i \neq\emptyset$,
then there is a contraction morphism $f_{ij} \colon X_i \to X_j$ and a factorisation $f_j = f_{ij} \circ f_i$.

\noindent Now suppose in addition that $V$ spans the N\'eron-Severi group of $Z$.
\item Pick $1\leq i\leq m$ such that a connected component $\mathcal C$ of $\mathcal C_i$ intersects
the interior of $\mathcal L_A (V )$. The following are equivalent:
\begin{itemize}
\item $\mathcal C$ spans $V$.
\item $f_i$ is birational and $X_i$ is $\Q$-factorial.
\end{itemize}
\end{enumerate}

 \end{theorem}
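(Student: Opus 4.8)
The entire statement is a geographic repackaging of the finiteness results of \cite{BCHM}, so my plan is to deduce it from the finiteness of minimal models together with the theory of ample (log canonical) models. As a first step I would verify that $\mathcal{L}_A(V)$ and $\mathcal{E}_A(V)$ are rational polytopes. The condition that $K_Z+\Theta$ be log canonical is cut out on the affine space $V$ by finitely many linear inequalities, obtained by comparing discrepancies on a fixed log resolution of the union of the supports of the divisors in $V$ together with $A$; hence $\mathcal{L}_A(V)$ is a rational polytope. The pseudo-effective locus $\mathcal{E}_A(V)$ is its intersection with the preimage of the pseudo-effective cone under $\Theta\mapsto K_Z+\Theta$, and using the rationality of pseudo-effective thresholds and the boundedness statements of \cite{BCHM} one checks that this is again a rational polytope, containing $\Theta_0=A+B_0$ in its interior.

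The core input is the finiteness of models \cite{BCHM}: as $\Theta$ ranges over $\mathcal{E}_A(V)$, only finitely many birational models arise as outputs of the MMP, and the associated \emph{Mori chamber decomposition} of $\mathcal{E}_A(V)$ is a finite subdivision into rational polytopes on which the log terminal model is essentially fixed. I would then pass from log terminal models to ample models: for each $\Theta$ the ample model is the canonical contraction of a log terminal model, i.e. $\mathrm{Proj}$ of the section ring of $K_Z+\Theta$, and it is determined by the numerical class of $K_Z+\Theta$ on that model up to the contraction of the classes on which it is trivial. Grouping the chambers according to which ample model they produce yields the finite family $f_i\colon Z\dashrightarrow X_i$ and the sets $\mathcal{A}_i$. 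Since within a single Mori chamber the ample model is constant, each $\mathcal{A}_i$ is a finite union of relative interiors of the rational polytopes of the decomposition, which is (1); when $f_i$ is birational, $K_Z+\Theta$ is big throughout $\mathcal{A}_i$, the corresponding region is a full Mori chamber, and its closure $\mathcal{C}_i$ is a single rational polytope.

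For (2), given $\Theta_j\in\mathcal{A}_j\cap\mathcal{C}_i$, I would approach $\Theta_j$ along a path $\Theta_t\in\mathcal{A}_i$. The ample divisor $H_i$ on $X_i$ realising $K_Z+\Theta_t$ degenerates, as $t\to 0$, to a nef, semi-ample but non-ample class whose associated contraction is the sought morphism $f_{ij}\colon X_i\to X_j$; the factorisation $f_j=f_{ij}\circ f_i$ then follows because both sides are the ample model of the single pair $(Z,\Theta_j)$, and ample models are unique up to isomorphism by \cite[Definition 3.1]{HMcK}. The existence and semi-ampleness of this limiting class is exactly where the base-point-free theorem and the rationality results of \cite{BCHM} enter, and I expect this to be the main technical obstacle.

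Finally, for (3), assume $V$ spans $\NS(Z)_{\R}$. If $f_i$ is birational and $X_i$ is $\Q$-factorial, then the ample cone of $X_i$ is a full-dimensional open subset of $\NS(X_i)_{\R}$; since $f_i$ is birational the induced map $V\to\NS(X_i)_{\R}$ is surjective, so the chamber $\mathcal{C}$, being the preimage of the ample cone, spans $V$. Conversely, if $\mathcal{C}$ spans $V$ then a general $\Theta\in\mathcal{C}$ has $K_Z+\Theta$ big, forcing $f_i$ to be birational; and the full-dimensionality of $\mathcal{C}$ forces the relevant space of classes on $X_i$ to have the same dimension as $\NS(Z)_{\R}$, which (since the divisors in $V$ generate) makes every divisor class on $X_i$ $\Q$-Cartier, i.e. $X_i$ is $\Q$-factorial. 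The delicate point in this direction is to transfer the spanning hypothesis from $Z$ to $X_i$ and to check that the ample-model contraction $f_i$ does not drop the Picard number below what $\Q$-factoriality requires.
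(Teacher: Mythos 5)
The paper does not prove this statement at all: it is quoted verbatim from \cite[Theorem 3.3]{HMcK} ``for the benefit of the reader'', and that theorem is in turn essentially a repackaging of the finiteness-of-models results of \cite{BCHM} (Theorem E and Corollary 1.1.5). So there is no proof in the paper to compare yours against. What you have written is a reconstruction of the Hacon--McKernan/\cite{BCHM} argument, and it does identify the correct ingredients: finiteness of ample models for the partition in (1), degeneration of the ample class to a semi-ample one for (2), and a Picard-number count for (3).

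As a proof, however, it is a roadmap with the hard points deferred rather than settled, and a few steps are stated loosely. First, the rational polyhedrality of $\mathcal E_A(V)$ is itself one of the main theorems of \cite{BCHM}, deduced there \emph{from} finiteness of models, so the logical order of your first paragraph is reversed (harmless only because everything is being imported wholesale). Second, in (1) the claim that a birational $f_i$ makes $\mathcal A_i$ ``a full Mori chamber'' is neither asserted nor needed: $\mathcal A_i$ may still be a union of relative interiors of polytopes of various dimensions; what one actually uses is that the ample-model condition for a fixed birational model is convex, whence the closure $\mathcal C_i$ is a single rational polytope. Third, and most substantively, in (3) the set $\mathcal C$ is \emph{not} the preimage of the ample cone of $X_i$: being the ample model also imposes the negativity condition that $f_i$ be a semi-ample model of $K_Z+\Theta$, and your forward implication needs this condition to be open near the chosen $\Theta$ --- this is exactly where \cite[Lemma 3.6]{HMcK} (which the present paper also invokes, to see that each full-dimensional chamber corresponds to an output of an MMP on $(Z,\Phi)$) carries the weight. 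Likewise ``full-dimensionality of $\mathcal C$ forces $\Q$-factoriality'' is the content of that implication, not a consequence of it; one must show that the pushforwards of the classes in $\mathcal C$, together with the $f_i$-exceptional divisors, account for every Weil divisor class on $X_i$. None of this makes your outline wrong, but since the paper's own treatment is a citation, the honest verdict is that your text is a plausible sketch of the cited proof rather than an independent one.
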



The following is a $G$-equivariant version of \cite[Lemma 4.1]{HMcK}.

\begin{lemma}\label{lem41} Let $G$ be a connected group.
 Let $\phi\colon X\to S$ and $\psi\colon Y\to T$ be two $G$-Sarkisov related $G$-Mori fibre spaces corresponding to two $\mathbb{Q}$-factorial klt
projective $G$-pairs $(X, \Delta)$ and $(Y, \Gamma)$.
Then we may find a smooth projective variety $Z$ with a regular action of $G$, two birational $G$-equivariant contractions $f\colon Z\dasharrow X$ and $g\colon Z\dasharrow Y$, a klt 
$G$-pair $(Z,\Phi)$, an ample $\mathbb{Q}$-divisor $A$ on $Z$ and a two-dimensional rational
affine subspace $V$ of $WDiv_{\mathbb{R}}(Z)$ such that
\begin{enumerate}
 \item if $\Theta\in\mathcal L_A (V )$ then $\Theta-\Phi$ is ample,
\item $\mathcal A_{A,\phi\circ f}$ and $\mathcal A_{A,\psi\circ g}$ are not contained in the boundary of $\mathcal L_A (V )$,
\item $V$ satisfies (1-4) of Theorem \ref{thm3.3},
\item  $\mathcal C_{A, f}$ and $\mathcal C_{A, g}$ are two dimensional, and
\item  $\mathcal C_{A,\phi\circ f}$ and  $\mathcal C_{A,\psi\circ g}$ are one dimensional.
 
\end{enumerate}
\end{lemma}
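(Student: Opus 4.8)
The statement is the $G$-equivariant refinement of \cite[Lemma 4.1]{HMcK}, and the plan is to run the construction of \emph{loc. cit.} while checking that every choice can be made in the $G$-equivariant category. The guiding principle is that a connected group acts trivially on the discrete group $\NS(Z)$ — the same observation used in Remark \ref{GMMP} for extremal rays — so that for every $\Q$-divisor $D$ on $Z$ one has $g^{*}D\equiv D$ for all $g\in G$. Consequently the ample and pseudo-effective cones, the cells $\mathcal{A}_{i}$ and $\mathcal{C}_{i}$ of Theorem \ref{thm3.3}, and the whole of conditions (1)--(5) are numerical data that are automatically $G$-stable; the only genuinely new task is therefore to produce the geometric data $Z$, $f$, $g$ and $\Phi$ compatibly with the action.

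First I would fix the common source. Since $\phi\colon X\to S$ and $\psi\colon Y\to T$ are $G$-Sarkisov related there is a $\Q$-factorial klt $G$-pair $(Z_{0},\Phi_{0})$ and two $G$-equivariant MMP's producing $X$ and $Y$ (Remark \ref{GMMP}), hence $G$-equivariant birational maps $Z_{0}\dashrightarrow X$ and $Z_{0}\dashrightarrow Y$. I would then take a $G$-equivariant log resolution $Z\to Z_{0}$ resolving the indeterminacy of both maps; functorial resolution of singularities is available in characteristic zero and is automatically compatible with the regular $G$-action. This produces a smooth projective $Z$ with a regular $G$-action together with $G$-equivariant birational morphisms $f\colon Z\to X$ and $g\colon Z\to Y$, which are in particular birational contractions. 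The boundary $\Phi$ is assembled from the strict transforms of $\Delta$, $\Gamma$ and $\Phi_{0}$ together with the exceptional divisors; as $G$ is connected it fixes each of the finitely many prime exceptional divisors and it preserves $\Delta$ and $\Gamma$ by hypothesis, so $\Phi$ is $G$-invariant and $(Z,\Phi)$ is a klt $G$-pair.

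With $Z$, $f$, $g$ and $\Phi$ in hand, I would choose the ample $\Q$-divisor $A$ and the two-dimensional rational affine subspace $V\subset WDiv_{\R}(Z)$ exactly as in \cite[Lemma 4.1]{HMcK}. Here $\Phi$ is taken small enough with respect to $A$ that $\Theta-\Phi$ is ample for every $\Theta\in\mathcal{L}_{A}(V)$, giving (1); $V$ is placed so that $\mathcal{A}_{A,\phi\circ f}$ and $\mathcal{A}_{A,\psi\circ g}$ are not contained in the boundary of $\mathcal{L}_{A}(V)$, giving (2); $V$ is taken in the general position required by Theorem \ref{thm3.3} (in particular spanning $\NS(Z)$), giving (3); and the interpolation between the two boundaries is arranged so that $\mathcal{C}_{A,f}$ and $\mathcal{C}_{A,g}$ are two-dimensional while $\mathcal{C}_{A,\phi\circ f}$ and $\mathcal{C}_{A,\psi\circ g}$ are one-dimensional, giving (4) and (5). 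Since these are purely numerical statements about the geography of models on $Z$, they hold verbatim as in \cite{HMcK}; the triviality of the $G$-action on $\NS(Z)$ guarantees that the choices do not conflict with the action, and moreover that every ample model of Theorem \ref{thm3.3} inherits a regular $G$-action with $G$-equivariant factoring morphisms (by \cite[Prop 4.2.1]{BSU13} applied to the resolved contractions, the two induced actions agreeing by Remark \ref{commute}).

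The point needing the most care — and the only place where something beyond \cite{HMcK} enters — is the interaction between the equivariant resolution and the constraints (3)--(5): one must produce a \emph{smooth $G$-equivariant} $Z$ on which a two-dimensional $V$ simultaneously spans the relevant N\'eron--Severi space and realises the prescribed cell dimensions. I would handle this by performing the construction of \cite[Lemma 4.1]{HMcK} with a functorial resolution throughout, so that $Z$ is $G$-equivariant by construction while retaining the numerical structure used there, and by invoking the triviality of the $G$-action on $\NS(Z)$ to see that the very divisors and polytopes chosen in \emph{loc. cit.} are already $G$-compatible. Once this is secured, the remaining verifications are identical to the non-equivariant case.
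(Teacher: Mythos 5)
Your proposal follows essentially the same route as the paper: take the common $\Q$-factorial klt $G$-pair furnished by the definition of $G$-Sarkisov related, pass to a $G$-equivariant (functorial) log resolution resolving the indeterminacy of both maps, and then carry out the choices of $A$, $V$ and the verification of (1)--(5) exactly as in \cite[Lemma 4.1]{HMcK}. The only difference is that you spell out why the non-equivariant part transfers verbatim (triviality of the action of a connected group on $\NS(Z)$), a justification the paper leaves implicit.
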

\begin{proof}
 By assumption we may find a $\mathbb{Q}$-factorial klt
$G$-pair $(Z,\Phi)$ such that $f\colon Z\dasharrow X$ and $g\colon Z\dasharrow Y$ are both outcomes
of the $G$-equivariant MMP on $(Z,\Phi)$.
Let $p\colon W\to Z$ be any $G$-equivariant log resolution of  $(Z,\Phi)$ which resolves the
indeterminacy of $f$ and $g$. Such pair exists for instance by \cite[Proposition 3.9.1, Theorem 3.35, Theorem 3.36]{Kol07}.
The rest of the proof goes as in \cite[Lemma 4.1]{HMcK}.
\end{proof}

\begin{proof}[Proof of Theorem \ref{thmA}]
The proof follows the same lines as  \cite[Theorem 1.3]{HMcK}
but instead of choosing $(Z, \Phi)$ as in \cite[Lemma 4.1]{HMcK} we choose the $G$-pair given by Lemma \ref{lem41}.

We prove now that any $X_i$ as in Theorem \ref{thm3.3} carries a regular action of $G$ making $f_i$ $G$-equivariant.
Since $\Theta\in\mathcal L_A (V )$ implies $\Theta-\Phi$ is ample, by Theorem \ref{thm3.3}(3) and \cite[Lemma 3.6]{HMcK}, for every $X_i$
corresponding to a full-dimensional polytope of $\mathcal A_i$, the variety $X_i$ is the result of an MMP on $(Z,\Phi)$.
By Remark \ref{GMMP} this MMP is $G$-equivariant and therefore there is a regular action of $G$ on $X_i$.

Let $X_j$ be a variety corresponding to a non full-dimensional polytope $\mathcal A_j$.
Let $\mathcal A_i$ be a full-dimensional polytope such that $\mathcal C_i\cap \mathcal A_j\neq\emptyset$.
By Theorem \ref{thm3.3} there is a surjective morphism $f_{ij}\colon X_i\to X_j$.
By the Blanchard's lemma \cite[Prop 4.2.1]{BSU13} there is an action of $G$ on $X_j$ making the morphism $f_{ij}$ $G$-equivariant.
By Remark \ref{commute} this action does not depend on the choice of $i$.

The links given by
 \cite[Theorem 3.7]{HMcK} are $G$-equivariant. Indeed the maps appearing in the links are either 
 \begin{itemize}
  \item morphisms of the form $f_{ij}$ as in Theorem \ref{thm3.3}, and those are $G$-equivariant by the discussion above; or
  \item flops (with respect to a suitable boundary) of the form $f_{ij}\circ f^{-1}_{kj}$, and those are again  $G$-equivariant.
 \end{itemize}
\end{proof}

\section{Proof of Corollary \ref{thmB}}

\begin{definition}
 A connected subgroup $G<Bir(Z)$ is not maximal among the connected groups acting rationally on $Z$ if there is  a connected subgroup of $Bir(Z)$ acting rationally on $Z$
 such that $ G \subsetneq H$. It is {\it maximal among the connected groups acting rationally on $Z$} otherwise. We will say {\it maximal} for short.
\end{definition}


\begin{proof}[Proof of Corollary \ref{thmB}]
By a theorem of Weil \cite{Weil55} (see also \cite{Kraft18}) there is
 a birational model $\widetilde W$ of $W$, such that $G$ acts regularly on $\widetilde W$.
 We then run a $G$-equivariant MMP on $\widetilde W$ (see Remark \ref{GMMP}) and by \cite{BDPP} and \cite{BCHM} the result is a $G$-Mori fibre space $\phi\colon X\to S$.
 
 We prove now that    $G$ is maximal if and only if  for every Mori fibre space $\psi\colon Y\to T$ which is related to $\phi$ by a finite sequence of $G$-Sarkisov links we have $G=Aut^0(Y)$.
 
 Assume that $G$ is maximal.
 Let $X\to S\dasharrow Y\to T$ be a composition of $G$-Sarkisov links and let $\varphi\colon X\dasharrow Y$ be the corresponding birational map.
 Then $\varphi G \varphi^{-1}\subseteq Aut^0(Y)$ and if $G$ is maximal $\varphi G \varphi^{-1}= Aut^0(Y)$.
 
 Assume that $G$ is not maximal and let $H$ be a connected subgroup of $Bir(Z)$ acting rationally on $Z$ and such that $G \subsetneq H$.
 By a theorem of Weil \cite{Weil55} there is
 a birational model $\widehat W$ of $\widetilde W$, such that $H$ acts regularly on $\widehat W$. 
 We run an $H$-equivariant MMP on $\widehat W$ and obtain a Mori fibre space $Y\to T$ such that $H<Aut^0(Y)$.
 This MMP is also  $G$-equivariant.
 Therefore $X\to S$ and $ Y\to T$ are $G$-Sarkisov related.
 By Theorem \ref{thmA}, there is a finite sequence of $G$-Sarkisov links $X\to S\dasharrow Y\to T$.
\end{proof}

\bibliographystyle{alpha}
\bibliography{biblio}

\end{document}